\documentclass[11pt]{amsart}
\usepackage{amsmath,amssymb}
\usepackage{enumerate}
\usepackage{mathrsfs}
\usepackage{amsthm}
\usepackage{xcolor}

\setlength{\oddsidemargin}{0.25in}
\setlength{\evensidemargin}{0.25in}
\setlength{\textwidth}{5.95in}
\setlength{\topmargin}{0.0in}
\setlength{\textheight}{8.0in}

\newcommand\NN{\mathbb{N}}
\newcommand\RR{\mathbb{R}}

\newcommand\EE{\mathscr{E}}

\newcommand\diam{\mathrm{diam}}

\newcommand\pressure{\mathcal{P}}
\newcommand\Int{\mathrm{Int}}

\newcommand{\N}{\mathbb{N}}

\newcommand{\R}{\mathbb{R}}

\renewcommand{\Pi}{\pi}
\renewcommand{\emptyset}{\varnothing}

\newtheorem{theorem}{Theorem}[section]

\newtheorem{corollary}[theorem]{Corollary}

\newtheorem{example}[theorem]{Example}
\newtheorem{lemma}[theorem]{Lemma}
\newtheorem{remark}[theorem]{Remark}
\newtheorem{proposition}[theorem]{Proposition}

\title[Equality of H\"older exponents]{Equality of H\"older exponents for distribution functions of Gibbs measures}
\author{Pieter Allaart and Johannes Jaerisch}

\address{Mathematics Department, University of North Texas, 1155 Union Cir\#311430, Denton, TX 76203-5017, U.S.A.} 
\email{allaart@unt.edu}

\address{Graduate School of Mathematics, Nagoya University,
Furocho, Chikusaku, Nagoya, 464-8602, JAPAN} 
\email{jaerisch@math.nagoya-u.ac.jp}

\subjclass[2020]{Primary: 26A30, 37D35. Secondary: 28A80}

\keywords{H\"older exponent, Gibbs measure, Conformal iterated function system, Distribution function, Multifractal spectrum, Bounded distortion property}

\begin{document}

\begin{abstract}
Pointwise H\"older exponents describe the degree of regularity of a function near a point. For a function $f:\mathbb{R}\to\mathbb{R}$, a number $\alpha>0$ and a point $t_0\in\mathbb{R}$, write $f\in C^\alpha(t_0)$ if there exist a constant $C>0$, {a number $h>0$} and a polynomial $P$ of degree less than $\alpha$ such that
\[
|f(t)-P(t-t_0)|\leq C|t-t_0|^\alpha \qquad\mbox{for all {$t\in (t_0-h,t_0+h)$}}.
\]
The pointwise H\"older exponent of $f$ at $t_0$ is the number
\[
\alpha_f(t_0):=\sup\{\alpha>0: f\in C^\alpha(t_0)\}.
\]
A simpler quantity, also frequently called pointwise H\"older exponent in the mathematical literature, is the number
\[
\tilde{\alpha}_f(t_0):=\sup\{\alpha>0: f\in \tilde{C}^\alpha(t_0)\},
\]
where $f\in \tilde{C}^\alpha(t_0)$ means that there {exist $C>0$ and $h>0$} such that $|f(t)-f(t_0)|\leq C|t-t_0|^\alpha$ for all {$t\in (t_0-h,t_0+h)$}.
Clearly $\alpha_f(t)\geq \tilde{\alpha}_f(t)$, but strict inequality is possible and in fact common. In this paper we consider the case when $f=F_\mu$ is the distribution function of a Gibbs measure $\mu$ associated with an arbitrary H\"older continuous potential $\psi$ on a self-conformal set, and show that, under a very mild condition on $\psi$, $\alpha_f(t)=\tilde{\alpha}_f(t)$ for all $t$. As a consequence, we deduce that the pointwise H\"older spectrum of $f$ satisfies the multifractal formalism. As an application, we derive the pointwise H\"older spectrum of conjugacy maps between expanding piecewise $\mathcal{C}^{1+\epsilon}$ maps of an interval.
\end{abstract}

\maketitle

\section{Introduction}

A function $f:\RR\to\RR$ is said to be H\"older continuous with exponent $\alpha>0$ if there is a constant $C$ such that $|f(x)-f(y)|\leq C|x-y|^\alpha$ for all $x$ and $y$. However, this $\alpha$ represents the ``worst possible" behavior, and in general, a continuous function can at many points have substantially better regularity than the worst case. 

For $\alpha>0$ and $t_0\in\RR$, write $f\in C^\alpha(t_0)$ if there exist a constant $C>0$, {a number $h>0$} and a polynomial $P$ of degree less than $\alpha$ such that
\begin{equation}
|f(t)-P(t-t_0)|\leq C|t-t_0|^\alpha \qquad\mbox{for all {$t\in (t_0-h,t_0+h)$}}.
\label{eq:Holder-def}
\end{equation}
The {\em pointwise H\"older exponent} of $f$ at $t$ is the number
\begin{equation*}
\alpha_f(t):=\sup\{\alpha>0: f\in C^\alpha(t)\}, \qquad t\in\RR.
\end{equation*}
Of great interest are the level sets
\[
E_f(\alpha):=\{t\in\RR: \alpha_f(t)=\alpha\}, \qquad \alpha\geq 0.
\]
The {\em multifractal spectrum}, or {\em pointwise H\"older spectrum}, of $f$ is the function
\[
\alpha\mapsto \dim_H E_f(\alpha),
\]
where $\dim_H$ denotes Hausdorff dimension.

H\"older spectra are an important analytical tool in the study of certain physical processes that exhibit a wide range of local regularity behavior. They were introduced by Mandelbrot \cite{Mandelbrot} and Frisch and Parisi \cite{Frisch-Parisi} to study intermittent turbulence flows and intensity of seismic waves. Jaffard \cite{Jaffard1,Jaffard2} determined H\"older spectra for a class of self-similar functions satisfying a certain smoothness condition from a mathematical point of view using wavelet theory. Subsequent authors \cite{BKK,BenSlimane,JafMan,Seuret} computed H\"older spectra of specific self-affine functions; their results were generalized by the first author in \cite{Allaart-2018,Allaart-2020}. For pointwise H\"older spectra of Weierstrass-type functions, we refer to \cite{Otani}.

Pointwise H\"older exponents can be difficult to calculate, especially for $\alpha>2$, where, in order to show that $f\not\in C^\alpha(t_0)$, one must prove that no polynomial satisfying \eqref{eq:Holder-def} exists. For this reason perhaps, many authors use the following, simpler definition of H\"older exponent. Write $f\in \tilde{C}^\alpha(t_0)$ if there {exist $C>0$ and $h>0$} such that
\begin{equation*}
|f(t)-f(t_0)|\leq C|t-t_0|^\alpha \qquad\mbox{for all {$t\in(t_0-h,t_0+h)$}},
\end{equation*}
that is, the polynomial $P$ in \eqref{eq:Holder-def} is constant with value $f(t_0)$. Define
\begin{equation*}
\tilde{\alpha}_f(t):=\sup\{\alpha>0: f\in \tilde{C}^\alpha(t)\}, \qquad t\in\RR.
\end{equation*}
Clearly, $\alpha_f(t)\geq\tilde{\alpha}_f(t)$, but the reverse inequality may fail in general. For example, if $f(t)=t^2$, then $\alpha_f(t_0)=\infty$ for all $t_0\in\RR$ (since one can take $P(t)=(t+t_0)^2$ in \eqref{eq:Holder-def} for all $\alpha>2$), but $\tilde{\alpha}_f(0)=2$ whereas $\tilde{\alpha}_f(t_0)=1$ for all $t_0\neq 0$. In addition, a desirable property of H\"older exponents is that they are left unchanged upon perturbation of $f$ by a smooth function $g$. Indeed, $\alpha_f(t)=\alpha_{f+g}(t)$, whereas $\tilde{\alpha}_f(t)\neq\tilde{\alpha}_{f+g}(t)$ in general.

Unfortunately, the two definitions of pointwise H\"older exponent have frequently been conflated in the literature, often without proper justification. For this reason we feel that it is critically important to study classes of functions, ideally as general as possible, for which the two notions of H\"older exponent coincide. 

In general, if $f\in C^\alpha(t_0)$ for $\alpha>1$ and $f'(t_0)\neq 0$, then $\tilde{\alpha}_f(t_0)=1<\alpha_f(t_0)$. Turning this around, it follows that if $\alpha_f(t_0)=\tilde{\alpha}_f(t_0)$, then either $\alpha_f(t_0)\leq 1$ or $f'(t_0)=0$. This shows that the two definitions of H\"older exponent disagree for most functions known from calculus. On the other hand, many sufficiently irregular functions have the property that the only possible finite value of their derivative is zero.
For example, it was shown by the first author in \cite{Allaart-2018,Allaart-2020} that for certain classes of self-affine functions, including distribution functions of self-similar measures on $\RR$ satisfying the open set condition, the two definitions of pointwise H\"older exponent coincide.
In this note, we extend this result to distribution functions of arbitrary Gibbs measures on finitely generated self-conformal sets; see Theorem \ref{thm:main} for a precise statement. Although it may seem plausible that the result can be extended from self-similar measures to self-conformal ones, the proof nonetheless requires an entirely new approach. This is because the method of proof in \cite{Allaart-2020}, using divided differences, depends very explicitly on the strict self-similarity of the measure and can therefore not be extended to the self-conformal setting.

If $f=F_\mu$ is the distribution function of a Borel probability measure $\mu$, i.e. $F_{\mu}(x):=\mu\left((-\infty,x]\right)$ for $x\in\RR$,
then by \cite[Lemma 5.1]{JS-2020} we have for each $t\in \R$, \begin{equation}
\tilde{\alpha}_{F_\mu}(t)=\liminf_{r\rightarrow 0}\frac{\log\mu\left(B(t,r)\right)}{\log r}.
\label{eq:hoelder exponent as liminf}
\end{equation}
For the remainder of this introduction, let $\mu$ be a Gibbs measure associated with an arbitrary H\"older continuous potential on the attractor  of a conformal iterated function system (IFS) satisfying the open set condition -- see the next section for precise definitions. It is well known that the sets
\[
\EE(\alpha;\mu):=\left\{ x\in\RR\Big|\lim_{r\rightarrow0}\frac{\log\mu\left(B(x,r)\right)}{\log r}=\alpha\right\}, \qquad\alpha\in\R,
\]
satisfy the {\em multifractal formalism}; that is, $\dim_H \EE(\alpha;\mu)=\dim_P \EE(\alpha;\mu)=\beta^*(\alpha)$, where $\beta^*(\alpha)$ is the Legendre transform of a suitable scaling function $\beta(q)$, defined in the next section. This result was first proved in \cite{Patzschke}  for conformal iterated functions, and in \cite{Pesin-Weiss-97a} for conformal repellers (see also \cite{pesin-dimension-theory}  for a detailed account and related results).
The second author and Sumi \cite{JS-2020} showed recently that, in the case when all of the maps in the IFS are strictly increasing, the Hausdorff spectrum of the modified level sets
\[
\EE_*(\alpha;\mu):=\left\{ x\in\RR\Big|\liminf_{r\rightarrow0}\frac{\log\mu\left(B(x,r)\right)}{\log r}=\alpha\right\}, \qquad\alpha\in\R,
\]
is also given by the multifractal formalism: $\dim_H \EE_*(\alpha;\mu)=\beta^*(\alpha)$. 
It is straightforward to extend this result to general conformal IFS. Thus, as a consequence of our main theorem, the pointwise H\"older spectrum of $F_\mu$ is given by
\[
\dim_H E_{F_\mu}(\alpha)=\beta^*(\alpha).
\]



As an application of our main theorem, we determine the pointwise H\"older spectrum of conjugacy maps between expanding piecewise $\mathcal{C}^{1+\epsilon}$
interval maps in Corollary \ref{cor}. This result complements  \cite[Corollary 2.2]{JS-2020}, where the dimension spectrum of the conjugacy map is based on the simpler definition of the H\"older exponent $\tilde \alpha$. We note that it is straightforward to extend Corollary \ref{cor} to the framework of expanding circle endomorphisms. For related results on the non-differentiability of conjugacy maps in both frameworks we refer to \cite{JKPS-2009}.

\begin{remark}
{\rm
A closely related concept is that of $\alpha$-H\"older differentiability. For $\alpha>0$, say a function $f$ is {\em $\alpha$-H\"older differentiable} at a point $t_0$ if the limit
\[
\lim_{t\to t_0}\frac{|f(t)-f(t_0)|}{|t-t_0|^\alpha}
\]
exists. The existence of the above limit ($0$, positive and finite, or $+\infty$) gives information about the H\"older exponent $\tilde{\alpha}_f(t_0)$ (but not, in general, about $\alpha_f(t_0)$), and vice versa. H\"older differentiability for distribution functions of Gibbs measures was considered in \cite{Falconer-2004, KS-2009}, with a later improvement in \cite{Troscheit}. Although some of the methods used in these papers (including the use of pressure functions and the bounded distortion principle) are similar to those used here, the principal questions are fundamentally different.
}
\end{remark}

This paper is organized as follows: In Section \ref{sec:notation}, we introduce the relevant notation and definitions. In Section \ref{sec:main-theorem}, we state and prove our main result, namely the equality of $\alpha_F(t)$ and $\tilde{\alpha}_F(t)$ when $F$ is the distribution function of a Gibbs measure associated with a H\"older continuous potential $\psi$ on a self-conformal attractor. The only condition we impose on $\psi$ is that it is not cohomologous to the geometric potential $\varphi$ of the IFS; this is to avoid cases where $F$ is absolutely continuous and hence too ``regular". The technical heart of the proof is Proposition \ref{prop:no-positive-derivative}, which states that  for each positive integer $k$ the limit
\[
\lim_{y\to x}\frac{F(y)-F(x)}{(y-x)^k},
\]
if it exists, cannot be nonzero and finite.

As a consequence of the main result, we derive in Section \ref{sec:Holder-spectrum} that the (Hausdorff) multifractal spectrum of such a distribution function is given by the multifractal formalism. In this section we show also that, at least in the self-similar case, the packing multifractal spectrum does {\em not} satisfy the multifractal formalism. Finally, in Section \ref{sec:conjugacy-maps}, we give an application of our main theorem to conjugacy maps between expanding piecewise $\mathcal{C}^{1+\epsilon}$ maps.

\section{Notation and definitions} \label{sec:notation}

Let  $X\subset \RR$ be a compact interval of positive length. A $\mathcal{C}^{1+\epsilon}$ conformal iterated function system on $X$ is given by an index set $I$ with $2\le \#I<\infty$ and $\mathcal{C}^{1+\epsilon}$ conformal contracting maps $\Phi=(\phi_{i})_{i\in I}$, $\phi_{i}:X\rightarrow X$.
More precisely, we mean that,   for each $i\in I$, the map $\phi_i$ extends to a 
$\mathcal{C}^{1+\epsilon}$ differentiable map on a neighborhood of $X$ which 
satisfies $0<\left| \phi_{i}'(x) \right|<1$ for every $x\in X$. 
We refer to \cite{Mauldin-Urbanski} for further details and basic properties of conformal 
iterated function systems.
Let $\pi:I^{\N}\rightarrow\R$ denote the coding map of $\Phi$ which is for $\omega=(\omega_{1}\omega_{2}\dots)\in I^{\N}$ given by 
\[
\bigcap_{n\ge1}\phi_{\omega_{1}}\circ\dots\circ\phi_{\omega_{n}}(X)=\left\{ \pi(\omega)\right\} .
\]
Let $\Lambda:=\pi(I^{\N})$; then $\Lambda$ satisfies the set equation $\Lambda=\bigcup_{i\in I}\phi_i(\Lambda)$. We refer to $\Lambda$ as the \emph{self-conformal set} generated by $\Phi$. We say that $\Phi$ satisfies the {\em open set condition} if $\phi_{i}(\Int(X))\cap\phi_{j}(\Int(X))=\emptyset$ for all $i,j\in I$ with $i\neq j$.
To utilize the symbolic thermodynamic formalism (see \cite{bowen-equilibrium}) we will also need the following definitions. We denote by $\sigma:I^{\N}\rightarrow I^{\N}$ the left-shift on $I^{\N}$ which becomes a compact metric space endowed with the shift metric. A basis for the topology is given by the cylinder sets 
\[
[w_1\dots w_n]:=\{ \omega \in I^\N:\omega_1=w_1,\dots, \omega_n=w_n\}, \qquad w_1\dots w_n\in I^n, \quad n\in\NN.
\]
Let $\varphi:I^{\N}\rightarrow\R$ be the geometric
potential of $\Phi$, given by
\begin{equation} \label{eq:geometric-potential}
\varphi(\omega):=\log\left|\phi_{\omega_{1}}'(\pi(\sigma(\omega)))\right|,\quad\omega=\omega_{1}\omega_{2}\dots\in I^{\N}.
\end{equation}
Since each $\phi_i$ is $\mathcal{C}^{1+\epsilon}$ differentiable and the maps $\phi_i$ are uniformly contracting, it follows that $\varphi$ is H\"older continuous. Let $\psi:I^{\N}\rightarrow\R$ be H\"older continuous and denote by 
\[
S_{n}\psi:=\sum_{k=0}^{n-1}\psi\circ\sigma^{k}, \qquad n\geq 1
\]
its ergodic sums. We will frequently make use of the fact that any H\"older continuous $\psi:I^\N\rightarrow \R$ satisfies the {\em bounded distortion property}
\begin{equation}\label{eq:bdd}
\sup_{n\in \N} \sup_{w\in I^n} \sup_{\rho,\tau\in I^\N }\left|S_n \psi(w\rho)-S_n\psi(w\tau)\right|<\infty.
\end{equation} We say that a Borel probability measure $\mu_\psi$ on $I^{\N}$  is a Gibbs measure for $\psi$  if there exist two constants $C\ge 1$ and $P\in \R$ such that for each $n\in \N$ and for all $w \in I^n$ we have 
\begin{equation}\label{eq:gibbs}
C^{-1}\le \frac{\mu_\psi[w_1\dots w_n]}{e^{S_n\psi(\overline w)-nP }}\le C.
\end{equation} Here $\overline{w}$ denotes the infinite periodic sequence with period block $w$. 
It is well known that a Gibbs measure $\mu_\psi$ for a H\"older continuous potential always exists  \cite{bowen-equilibrium}.
Further note that the constant $P$ in the definition of a Gibbs measure is equal to  the topological pressure of $\psi$ with respect to $\sigma$, defined by
\[
\mathcal{P}(\psi):=\lim_{k\to\infty}\frac{1}{k}\log\sum_{w\in I^k}\exp S_k \psi(\overline{w}).
\]

Two H\"older potentials $\psi$ and $\xi$ on $I^\NN$ are said to be {\em cohomologous} if there is a bounded potential $\eta:I^\NN\to\RR$ such that $\psi-\xi=\eta-\eta\circ\sigma$. It is easy to see that, if $\psi$ and $\xi$ are cohomologous, then $\mathcal{P}(\psi)=\mathcal{P}(\xi)$.

We remark that if $\mu_\psi$ is a Gibbs measure for $\psi$, then it is also a Gibbs measure for $\psi-\pressure(\psi)$. We may therefore always assume that $\pressure(\psi)=0$ by replacing $\psi$ with $\psi-\pressure(\psi)$. 

Let 
\begin{equation} \label{spectrum-endpoints}
\alpha_{+}:=\sup_{\omega \in I^\N}\limsup_{n\rightarrow \infty } \frac{S_n \psi(\omega)}{S_n \varphi (\omega)} 
, \qquad
\alpha_{-}:=\inf_{\omega \in I^\N}\liminf_{n\rightarrow \infty } \frac{S_n \psi(\omega)}{S_n \varphi (\omega)} .
\end{equation}
It is well known that the multifractal spectrum is complete \cite{schmeling1999}, that is, $\EE(\alpha;\mu_\psi)\neq\emptyset$ if and only if $\alpha\in\left[\alpha_{-},\alpha_{+}\right]$. 
Moreover, the following dichotomy is well known \cite{Pesin-Weiss-97a}: either we have $\alpha_{-}<\alpha_{+}$, or $\psi$ is cohomologous to $(\dim_{H}\Lambda)\varphi$. Note that in the latter case we have  $\alpha_{-}=\alpha_{+}=\dim_{H}\Lambda$.

Since $\varphi<0$, there is for each $q\in\R$ a unique $\beta(q)\in\RR$ such that 
\begin{equation}
\mathcal{P}(\beta(q)\varphi+q\psi)=0. \label{eq:pressure equation}
\end{equation}
We denote by 
\begin{equation} \label{eq:Legendre-transform}
\beta^{*}(\alpha):=\inf_{q\in\R}\left\{\beta(q)+\alpha q\right\}
\end{equation}
the Legendre transform of $\beta$. 
If $\alpha_-<\alpha_+$, then it is well known that $\beta^*$ is real-analytic and strictly concave on $(\alpha_-,\alpha_+)$ \cite{pesin-dimension-theory}.

\section{The main theorem} \label{sec:main-theorem}

Our main result is the following:

\begin{theorem} \label{thm:main}
Let $\Phi=(\phi_{i}:X\rightarrow X)_{i\in I}$ be a conformal iterated
function system satisfying the open set condition. Let $\pi:I^\NN\to\RR$ denote the coding map of $\Phi$. 
Let $\varphi$ denote the geometric potential of $\Phi$ defined in \eqref{eq:geometric-potential}  and let $\psi$ be any H\"older potential with $\pressure(\psi)=0$ such that $\psi$ is not cohomologous to $\varphi$.  Let $\mu_\psi$ be a Gibbs measure for $\psi$. Then, for the distribution function $F$ of $\mu_\psi\circ\pi^{-1}$, we have that $\alpha_F=\tilde{\alpha}_F$. 
\end{theorem}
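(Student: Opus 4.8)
The plan is to prove the nontrivial inequality $\alpha_F(t_0)\le\tilde\alpha_F(t_0)$ for every $t_0$, since $\alpha_F\ge\tilde\alpha_F$ holds for all functions directly from the definitions \eqref{eq:Holder-def}--\eqref{eq:nondirectional-Holder-exponent}. Write $\nu:=\mu_\psi\circ\pi^{-1}$ for the pushforward measure on $\RR$, so that $F$ is its distribution function and \eqref{eq:hoelder exponent as liminf} gives $\tilde\alpha_F(t_0)=\liminf_{r\to0}\log\nu(B(t_0,r))/\log r$. If $t_0\notin\Lambda$ then $F$ is constant near $t_0$ and both exponents are $+\infty$, so I may assume $t_0\in\Lambda$ and set $\tilde\alpha:=\tilde\alpha_F(t_0)\in[\alpha_-,\alpha_+]$ (cf. \eqref{spectrum-endpoints}). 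Fix $\alpha>\tilde\alpha$ and suppose, for contradiction, that $F\in C^\alpha(t_0)$ with approximating polynomial $P$ of degree $d<\alpha$. Let $\osc_d(t_0,h)$ denote the sup-distance of $F$ from the best polynomial of degree $\le d$ on $B(t_0,h)$. Comparing $F$ to $P$ in \eqref{eq:Holder-def} and using $|t-t_0|\le h$ on $B(t_0,h)$ immediately yields the upper bound $\osc_d(t_0,h)\le Ch^\alpha$ for all small $h$. The whole argument therefore reduces to a matching lower bound, which I regard as the heart of the matter.

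\medskip
\noindent\textbf{Oscillation Lemma.} \emph{There is $\delta=\delta(\Phi,\psi,d)>0$ such that for every $t_0\in\Lambda$,}
\[
\osc_d(t_0,h)\ \ge\ \delta\,\nu\bigl(B(t_0,h)\bigr)\qquad\text{for all sufficiently small }h>0.
\]

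\medskip
\noindent Granting the lemma, I finish as follows. Evaluate along a sequence $h_k\downarrow0$ realizing the liminf in \eqref{eq:hoelder exponent as liminf}, so that $\nu(B(t_0,h_k))=h_k^{\tilde\alpha+o(1)}$; then $\delta\,h_k^{\tilde\alpha+o(1)}\le\osc_d(t_0,h_k)\le Ch_k^\alpha$, which is impossible for large $k$ because $\tilde\alpha<\alpha$. Hence $F\notin C^\alpha(t_0)$ for every $\alpha>\tilde\alpha$, i.e. $\alpha_F(t_0)\le\tilde\alpha$, as required. (For $\tilde\alpha<1$ the lemma is not even needed: a polynomial of degree $<\alpha<1$ is constant, so $C^\alpha(t_0)=\tilde C^\alpha(t_0)$ and the two exponents coincide below $1$.)

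To prove the Oscillation Lemma I would exploit the self-conformal structure together with the standing hypothesis that $\psi$ is not cohomologous to $\varphi$. This hypothesis is exactly what prevents $F$ from being affine on any subinterval; equivalently, $\nu$ is not the conformal (Lebesgue-type) measure. Concretely, on a cylinder $\phi_{\omega|n}(X)\ni t_0$ the map $\phi_{\omega|n}$ transports the global graph of $F$ to the local one, and by the Gibbs property \eqref{eq:gibbs} and the bounded distortion property \eqref{eq:bdd} the child masses $\nu(\phi_{\omega|n\,i}(X))$ are comparable, with multiplicative constants depending only on $\Phi,\psi$, to $\nu(\phi_{\omega|n}(X))$ times factors governed by $S_1\psi$, while the child lengths are governed by $S_1\varphi$ from \eqref{eq:geometric-potential}. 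Since $\psi\not\sim\varphi$, these mass ratios cannot all agree with the length ratios, so inside $B(t_0,h)$ the graph of $F$ is a genuine non-affine ``staircase'': its increment $\nu(B(t_0,h))$ is distributed unevenly at a definite scale. A compactness argument over the family of renormalized local configurations, which is precompact by \eqref{eq:bdd} and whose limit points are distribution functions of non-flat measures (hence not polynomials of degree $\le d$), then produces a single $\delta>0$ depending only on $d$, since ``distance from the best degree-$\le d$ polynomial, normalized by total mass'' is a positive, continuous functional on this compact family.

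The delicate part, and what I expect to be the main obstacle, is the \emph{uniformity in $t_0$} of $\delta$, in particular the exceptional base points at which the local picture could a priori renormalize to something affine, such as periodic points $t_0=\pi(\overline{w})$ whose child ratios happen to be balanced. To rule these out I would show that genuine smoothness of $F$ at a single point propagates to global flatness: the expansion $\nu([t_0,t_0+h])=\sum_{j=1}^d a_j h^j+O(h^\alpha)$ forced by \eqref{eq:Holder-def} would, through the set equation $\Lambda=\bigcup_{i\in I}\phi_i(\Lambda)$ and the Gibbs/bounded-distortion estimates applied to the cylinders filling a one-sided neighborhood of $t_0$, compel the mass ratios to match the length ratios on an orbit-dense collection of cylinders, forcing $\psi\sim\varphi$ globally and contradicting the hypothesis. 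Equivalently, writing the local mass function as $\nu([t_0,t_0+h])=h^{\tilde\alpha}\Theta(h)$ with $\Theta$ approximately log-periodic under the relevant contraction, the non-cohomology hypothesis forces $\Theta$ to be non-constant at every $t_0$, which is precisely the obstruction to a power-series expansion and hence to $F\in C^\alpha(t_0)$. Two routine technical points remain: passing between continuous scales $h$ and cylinder scales (the interval $B(t_0,h)$ only approximately coincides with a cylinder, controlled by \eqref{eq:bdd} and the open set condition), and the countably many endpoints of complementary gaps of $\Lambda$, where $F$ is one-sidedly constant but the estimate survives on the $\Lambda$-side.
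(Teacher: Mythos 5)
Your reduction is sound as far as it goes: the inequality $\alpha_F\ge\tilde\alpha_F$ is trivial, the deduction of the theorem from your Oscillation Lemma is correct, and the observation that only $\alpha>1$ requires work is right. But the Oscillation Lemma is the entire content of the theorem, and you have not proved it; you have only described the shape of a proof and then explicitly deferred the one point where the hypothesis ``$\psi$ not cohomologous to $\varphi$'' must actually be used. Two specific gaps. First, the compactness step contains a non sequitur: limit points of the renormalized configurations are indeed distribution functions of measures, but distribution functions of measures can perfectly well be polynomials of degree $\le d$ (e.g.\ affine), so ``non-flat measure, hence not a polynomial'' proves nothing; ruling out polynomial limits at every base point $t_0$, uniformly, is precisely the hard part. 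Second, your proposed fix --- that a polynomial expansion of $\nu([t_0,t_0+h])$ at one point ``would compel the mass ratios to match the length ratios on an orbit-dense collection of cylinders, forcing $\psi\sim\varphi$'' --- is a statement of the desired conclusion, not an argument; no mechanism is given for how a single-point asymptotic propagates to a global cohomology relation. (There are also smaller unaddressed points, e.g.\ that under mere OSC one must justify $\nu(B(t_0,h))\asymp\nu(\text{cylinder of diameter}\asymp h\text{ at }t_0)$ before a cylinder-level oscillation bound can be transferred to balls.)

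For comparison, the paper takes a different and more economical route that supplies exactly the missing mechanism. Instead of a uniform oscillation lower bound, it proves (Proposition \ref{prop:no-positive-derivative}) that for no $k$ and no $x$ can $\lim_{y\to x}(F(y)-F(x))/(y-x)^k$ be a finite nonzero number; an elementary induction on the coefficients of $P$ then kills the polynomial. The propagation you are missing is achieved there by a concrete construction: one picks a periodic block $\tau_1\dots\tau_\ell$ with $S_\ell(\psi-k\varphi)(\overline{\tau_1\dots\tau_\ell})\neq 0$, appends $(\tau_1\dots\tau_\ell)^N$ to the coding of $x$, and compares the secant-slope identity \eqref{eq:secant-slope} with the Gibbs and bounded-distortion estimates as $N\to\infty$; this forces $S_\ell(\psi-\varphi)(\overline{\tau_1\dots\tau_\ell})=0$ for all such blocks, and Lemma \ref{lem:cohom-1} (finitely many periodic-orbit ratios $S_\ell\psi/S_\ell\varphi$ imply cohomology) yields the contradiction. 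Your approach could in principle be completed along the lines of the self-affine case in \cite{Allaart-2018,Allaart-2020}, but as written the key lemma is asserted rather than established.
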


\begin{remark}
{\rm
The condition that $\psi$ is not cohomologous to $\varphi$ is needed to rule out cases where $F$ is too ``smooth".
As a simple example, consider the IFS $\Phi=\{x/2,(x+1)/2\}$, whose attractor is the interval $[0,1]$. Clearly, $\varphi\equiv -\log 2$. Choosing $\psi\equiv -\log 2$ as well, we see that $\mu:=\mu_\psi\circ\pi^{-1}$ is Lebesgue measure on $[0,1]$, so $F$ is infinitely differentiable on $(0,1)$ and hence $\alpha_F(t)\equiv \infty$ on $(0,1)$. On the other hand, it is easy to see that $\tilde{\alpha}_F(t)\equiv 1$ on $(0,1)$.
}
\end{remark}

More generally, we have the following characterization:

\begin{proposition} \label{prop:when-cohomologous}
 We have that $\psi$ is cohomologous to $\varphi$ if and only if $\Lambda$ is an interval and some (hence any) Gibbs measure $\mu_\psi$ for $\psi$ satisfies that $\mu_\psi \circ \pi^{-1}$ is  absolutely continuous with respect to the Lebesgue measure with a Radon-Nykodym density bounded away from zero and infinity.
\end{proposition}

\begin{proof} 
First assume that $\psi$ is cohomologous to $\varphi$. Hence, $P(\varphi)=P(\psi)=0$
and thus, by Bowen's formula (see e.g. \cite[Theorem 4.2.11]{MauUrb03}),
$\Lambda$ has positive Lebesgue measure. We conclude that $X=\bigcup_{i\in I}\Phi_{i}(X)$
and therefore, $\Lambda=X$ is an interval. Indeed, if $X\setminus\bigcup_{i\in I}\Phi_{i}(X)$
was non-empty, it would contain an interval of positive length. By
standard arguments, this would imply that $\Lambda$ contains no Lebesgue
density points, which gives the desired contradiction. Let $\lambda$
denote the normalised Lebesgue measure on $X$. Since $\Lambda=X$
and $P(\varphi)=0$, the unique Borel probability measure $\mu$ on
$I^{\mathbb{N}}$ satisfying $\mu\circ\pi^{-1}=\lambda$ is a Gibbs
measure for $\varphi$. Since $\psi$ is cohomologous to $\varphi$,
it follows from \cite[Theorem 2.2.7]{MauUrb03} that any Gibbs measure
$\mu_{\psi}$ for $\psi$ is equivalent to $\mu$ with a density bounded
away from zero and infinity. Hence, $\mu_{\psi}\circ\pi^{-1}$ is
equivalent to $\lambda$ with a density bounded away from zero
and infinity. 

Conversely, assume that $\Lambda$ is an interval. Then, $\Lambda=X$
and we have $P(\varphi)=0$ by Bowen's formula. Further, the unique
Borel probability measure $\mu$ on $I^{\mathbb{N}}$ defined by $\mu\circ\pi^{-1}=\lambda,$
is a Gibbs measure for $\varphi$. Assume that $\mu_{\psi}$ is a
Gibbs measure for $\psi$ satisfying that $\mu_{\psi}\circ\pi^{-1}$
is absolutely continuous with respect to $\lambda$ with density
bounded away from zero and infinity. Then, $\mu$ is also a Gibbs
measure for $\psi$. Since $P(\varphi)=P(\psi$),  it follows from
\cite[Theorem 2.2.7]{MauUrb03} that $\varphi$ is cohomologous to
$\psi$. 
\end{proof}


The remainder of this section is devoted to the proof of Theorem \ref{thm:main}.
{Let $N:=|I|$ denote the cardinality of $I$.} We will assume without loss of generality that the images of $X$ under the maps $\phi_1,\dots,\phi_N$ are arranged from left to right; that is, $\inf\phi_{i+1}(X)\geq \sup\phi_i(X)$ for $i=1,\dots,N-1$. The first lemma is needed to deal with the codings of points that lie close to the common endpoint of adjacent cylinder intervals. The precise conclusion depends on the orientations of the first and last maps, $\phi_1$ and $\phi_N$, of the IFS. In this section, we write $\mathbf{i}|_n:=i_1\dots i_n$ for $\mathbf{i}=i_1\dots i_p\in I^p$ with $p\geq n$; and similarly, $\omega|_n:=\omega_1\dots\omega_n$ for $\omega=\omega_1\omega_2\dots\in I^\NN$. {For a word $u\in I^*$ and $k\in\NN\cup\{\infty\}$, $u^k$ will denote the $k$-fold concatenation of $u$ with itself.}

\begin{lemma} \label{lem:adjacency-possibilities}
Let $\mathbf{i}=i_1\dots i_p, \mathbf{j}=j_1\dots j_p\in I^p$ with $\mathbf{i}\neq\mathbf{j}$, where $p\geq 3$. Let $n:=|\mathbf{i}\wedge\mathbf{j}|+1$ be the first coordinate at which $\mathbf{i}$ and $\mathbf{j}$ disagree. Assume $n\leq p-2$, and suppose the cylinder intervals $\pi([\mathbf{i}])$ and $\pi([\mathbf{j}])$ have a common endpoint. Then $|i_n-j_n|=1$ and one of the following holds:
\begin{enumerate}[(i)]
\item $i_{n+1}\dots i_p\in\big\{1^{p-n},N^{p-n},1N^{p-n-1},N1^{p-n-1}\big\}$; or
\item $i_{n+1}\dots i_p$ is a prefix of $(1N)^\infty$ and $i_n\neq N$; or
\item $i_{n+1}\dots i_p$ is a prefix of $(N1)^\infty$ and $i_n\neq 1$.
\end{enumerate}
\end{lemma}

{
\begin{remark}
{\rm 
Although we stated the conclusion for the word $\mathbf{i}$, by the symmetry of the hypotheses the same conclusion holds also for the word $\mathbf{j}$.
}
\end{remark}
}

\begin{proof}
Since the cylinder intervals $\pi([1]),\dots,\pi([N])$ are arranged from left to right, the cylinder intervals $\pi([\mathbf{i}|_{n-1}1]),\dots,\pi([\mathbf{i}|_{n-1}N])$ are arranged either from left to right or from right to left in $\pi([\mathbf{i}|_{n-1}])$ (depending on the orientation of $\phi_{i_1}\circ\dots\circ\phi_{i_{n-1}}$). Hence, $|i_n-j_n|=1$.

Assume without loss of generality that $\pi([\mathbf{i}|_n])$ lies to the left of $\pi([\mathbf{j}|_n])$. Then for each $l=n+1,\dots,p$, $\pi([\mathbf{i}|_l])$ is the rightmost subcylinder of $\pi([\mathbf{i}|_{l-1}])$ and $\pi([\mathbf{j}|_l])$ is the leftmost subcylinder of $\pi([\mathbf{j}|_{l-1}])$.
Thus, $i_l,j_l\in\{1,N\}$ for $l=n+1,\dots,p$.

If $\phi_1$ is increasing, then in $i_{n+1}\dots i_p$ any digit $1$ must be succeeded by another $1$. Similarly, if $\phi_N$ is increasing, any digit $N$ must be followed by another $N$. On the other hand, if $\phi_1$ is decreasing, any digit $1$ must be followed by a digit $N$; and likewise, if $\phi_N$ is decreasing, any digit $N$ must be followed by a digit $1$. This implies that $i_{n+1}\dots i_p$ is of one of the forms in (i) or else is a prefix of either $(1N)^\infty$ or $(N1)^\infty$. 

Suppose $i_{n+1}\dots i_p$ is a prefix of $(1N)^\infty$. Then $\phi_1$ and $\phi_N$ are both decreasing. If $i_n=N$, then necessarily $j_n=N-1$. Since $\pi([\mathbf{i}|_n])$ lies to the left of $\pi([\mathbf{j}|_n])$, this means that the map $\phi_{i_1}\circ\dots\circ\phi_{i_{n-1}}$ is decreasing. But $\phi_N$ is also decreasing, and so $\phi_{i_1}\circ\dots\circ\phi_{i_n}$ is increasing. Therefore, the rightmost subcylinder of $\pi([\mathbf{i}|_{n}])$ is $\pi([\mathbf{i}|_n N])$, contradicting that $i_{n+1}=1$. Hence, $i_n\neq N$. A similar argument shows that if $i_{n+1}\dots i_n$ is a prefix of $(N1)^\infty$, then $i_{n+1}\neq 1$.
\end{proof}

\begin{lemma} \label{lem:non-adjacency}
Let {$\ell\geq 3$} and let $\tau_1\dots\tau_\ell\in I^\ell$ be a word such that $\tau_1\neq\tau_2$. Then for any $\omega\in I^\NN$ there are infinitely many integers $n$ such that the cylinder intervals $\pi([\omega|_{n+3\ell}])$ and $\pi([\omega|_{n}(\tau_1\dots\tau_\ell)^3])$ do not have a common endpoint.
\end{lemma}

\begin{proof}
Let $\omega\in I^\NN$, and consider two cases:

{\em Case 1.} Suppose first that there exist infinitely many $n$ such that $\omega_{n}\neq\tau_1$. Take such an $n$. Suppose, by way of contradiction, that the cylinder intervals $\pi([\omega|_{n-1}\omega_{n}\dots\omega_{n+\ell+1}])$ and $\pi([\omega|_{n-1}\tau_1\dots\tau_\ell\tau_1\tau_2])$ are adjacent. Then either (i), (ii) or (iii) of Lemma \ref{lem:adjacency-possibilities} holds for the word $\mathbf{i}:=\omega|_{n-1}\tau_1\dots\tau_\ell\tau_1\tau_2$, with $p:=n+\ell+1$, so that $i_{n+1}\dots i_p=\tau_2\dots\tau_\ell\tau_1\tau_2$. Since $\tau_2\neq\tau_1$, we can rule out (i). If $\tau_2\dots\tau_n\tau_1\tau_2$ is a prefix of $(1N)^\infty$, then $\tau_2=1$ and so $\tau_1=N$, which means (ii) does not hold. By a symmetric argument, (iii) does not hold. Hence, $\pi([\omega|_{n+\ell+1}])$ and $\pi([\omega|_{n-1}\tau_1\dots\tau_\ell\tau_1\tau_2])$ are not adjacent, so neither are $\pi([\omega|_{n-1+3\ell}])$ and $\pi([\omega|_{n-1}(\tau_1\dots\tau_\ell)^3])$, since $\ell+2\leq 3\ell$.

{\em Case 2.} Suppose $\omega$ ends in $\tau_1^\infty$. Then $\omega_{n-1}=\tau_1$ and $\omega_n\neq\tau_2$ for all large enough $n$. Fix such a large $n$. Recall that $\ell\geq 3$. Apply Lemma \ref{lem:adjacency-possibilities} with $p:=n+\ell+1$, $\mathbf{i}:=\omega|_{n-1}\tau_2\dots\tau_\ell\tau_1\tau_2\tau_3=\omega|_{n-2}\tau_1\dots\tau_\ell\tau_1\tau_2\tau_3$ and $\mathbf{j}:=\omega|_{n+\ell+1}$, so $i_{n+1}\dots i_p=\tau_3\dots\tau_\ell\tau_1\tau_2\tau_3$. We can immediately rule out condition (i) of Lemma \ref{lem:adjacency-possibilities}, because $i_{n+1}=i_p$ and $i_{p-2}=\tau_1\neq\tau_2=i_{p-1}$. On the other hand, if $\tau_3\dots\tau_n\tau_1\tau_2\tau_3$ is a prefix of $(1N)^\infty$, then $\tau_3=1$ and hence $i_n=\tau_2=N$, so (ii) does not hold. Similarly, (iii) does not hold. Thus, the cylinder intervals $\pi([\mathbf{i}])$ and $\pi([\mathbf{j}])$ are not adjacent, so neither are $\pi([\omega|_{n-2+3\ell}])$ and $\pi([\omega|_{n-2}(\tau_1\dots\tau_\ell)^3])$, since $\ell+3\leq 3\ell$.

\end{proof}


\begin{lemma} \label{lem:cohom-1} 
Suppose there exists $\ell_0\ge 1$ such that the set
\[
A:=\left\{\frac{S_{\ell}\psi\left(\overline{\tau_{1}\dots \tau_{\ell}}\right)}{S_{\ell}\varphi\left(\overline{\tau_{1}\dots \tau_{\ell}}\right)}:\quad \ell \ge \ell_0,\quad \tau_1, \dots, \tau_\ell \in I \right\}
\]
is finite.
Then $\psi$ is cohomologous to $(\dim_{H}\Lambda)\varphi$  and $ A=\left\{\dim_{H}\Lambda\right\} $.
\end{lemma}

\begin{proof}
By the bounded distortion property \eqref{eq:bdd}, it is easy to see that for each
$\ell_{0}\ge1$,
\[
[\alpha_{-},\alpha_{+}]=\overline{\left\{ \frac{S_{\ell}\psi\left(\overline{\tau_{1}\dots \tau_{\ell}}\right)}{S_{\ell}\varphi\left(\overline{\tau_{1}\dots \tau_{\ell}}\right)}:\ell\ge\ell_{0},\tau_{1},\dots,\tau_{\ell}\in I\right\} }.
\]
Since $A$ is finite, we have $\alpha_-=\alpha_+$. By a well-known dichotomy for multifractal spectra  \cite{Pesin-Weiss-97a}, {it follows} that $\psi$ is cohomologous to $(\dim_{H}\Lambda)\varphi$. Clearly, this implies $A=\left\{\dim_{H}\Lambda\right\}$.
\end{proof}

\begin{proposition} \label{prop:no-positive-derivative}
For every $k\in\mathbb{N}$ and $x\in\mathbb{R}$ the limit 
\[
\lim_{y\rightarrow x}\frac{F(y)-F(x)}{(y-x)^{k}}
\]
either does not exist, or it takes the value $0$ or $+\infty$. 
\end{proposition}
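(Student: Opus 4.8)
The plan is to argue by contradiction: assuming that for some $k\in\N$ and $x\in\R$ the displayed limit exists, is finite and nonzero, I will deduce that $\psi$ is cohomologous to $(\dim_H\Lambda)\varphi$, contradicting the hypothesis. Write $\mu:=\mu_\psi\circ\pi^{-1}$. First I record three reductions. If $x\notin\Lambda$ then $F$ is constant near $x$ and the limit is $0$; if $\mu(\{x\})>0$ then the left increment $F(y)-F(x)$ tends to a negative constant while $(y-x)^k\to 0$, so the limit is $\pm\infty$ or fails to exist. Hence I may assume $x\in\Lambda$, $\mu(\{x\})=0$, and that the two-sided limit equals $L$. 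Since $F$ is nondecreasing the right increments force $L\ge 0$, and comparing the two one-sided limits shows that for \emph{even} $k$ necessarily $L=0$; so I may take $k$ odd and $L\in(0,\infty)$, in which case $F(x+r)-F(x)\sim Lr^k$ and $F(x)-F(x-r)\sim Lr^k$, whence $\mu(B(x,r))\sim 2Lr^k$ as $r\downarrow 0$.

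The engine of the proof is the following estimate, which I would prove for an \emph{arbitrary} $p=\pi(\eta)\in\Lambda$ at which $\mu$ obeys such a two-sided power law. Put $g:=\psi-k\varphi$ and $\Delta_n:=\phi_{\eta_1\dots\eta_n}(X)\ni p$. By \eqref{eq:gibbs} with $\pressure(\psi)=0$ and by conformality with \eqref{eq:bdd}, one has $\mu(\Delta_n)\asymp e^{S_n\psi(\eta)}$ and $|\Delta_n|\asymp e^{S_n\varphi(\eta)}$ with \emph{uniform} constants, and moreover $\mu(\Delta_n)\asymp\mu(B(p,|\Delta_n|))$ uniformly, since only boundedly many generation-$n$ cylinders of comparable mass meet $B(p,|\Delta_n|)$. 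The key point is that the per-point constant $2L$ \emph{cancels} in ratios: writing $\mu(\Delta_n)=\theta_n\,\mu(B(p,|\Delta_n|))$ with $\theta_n\in[K^{-1},K]$, the power law gives, for $n>n'$,
\[
\Big|\log\tfrac{\mu(\Delta_n)}{\mu(\Delta_{n'})}-k\log\tfrac{|\Delta_n|}{|\Delta_{n'}|}\Big|\le 2\log K+o(1).
\]
Since the left side equals $\big|S_{n-n'}g(\sigma^{n'}\eta)\big|+O(1)$ by the Gibbs and distortion estimates, the existence of the finite nonzero limit at $p$ yields $\sup_{m,j}\big|S_m g(\sigma^j\eta)\big|\le C$ with $C$ depending only on the Gibbs and distortion constants, \emph{not} on $p$.

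Next I would propagate the power law off the orbit of $x$. Prepending a symbol $i$ corresponds to the increasing $\mathcal C^{1+\epsilon}$ map $\phi_i$, and $\mu(\phi_i(A))=\int_A\widetilde J_i\,d\mu$ for a continuous, strictly positive Jacobian density $\widetilde J_i$ (the projection of the Hölder Jacobian cocycle of the Gibbs measure). Consequently, if $\mu$ obeys the two-sided $r^k$-law at $p$, then it does so at $\phi_i(p)=\pi(i\eta)$, the constant merely changing by the finite factor $\widetilde J_i(p)\,\phi_i'(p)^{-k}$; iterating, the law holds at every $\pi(w\eta)$. Applying this with $\eta=\omega$ (the code of $x$) and $p_m:=\pi(w^m\omega)$ for an arbitrary $w\in I^{\ell}$, the uniform bound above gives $\big|S_{m\ell}g(w^m\omega)\big|\le C$ for all $m$. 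As $w^m\omega$ and $\overline w$ agree on their first $m\ell$ symbols, \eqref{eq:bdd} yields $\big|S_{m\ell}g(w^m\omega)-m\,S_\ell g(\overline w)\big|\le C'$, so $\big|m\,S_\ell g(\overline w)\big|\le C+C'$ for all $m$ and therefore $S_\ell g(\overline w)=0$. Since $w$ was arbitrary, $S_\ell\psi(\overline w)/S_\ell\varphi(\overline w)=k$ for all periodic words, so Lemma~\ref{lem:cohom-1} applies with $A=\{k\}$ and gives that $\psi$ is cohomologous to $(\dim_H\Lambda)\varphi$ and $k=\dim_H\Lambda$. But $k\in\N$ while $\dim_H\Lambda\le 1$, forcing $k=1$, $\dim_H\Lambda=1$, and $\psi\sim\varphi$ — contradicting the hypothesis.

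I expect the main obstacle to be the propagation step: one must verify that the \emph{exact} finite nonzero limit (not merely the comparability $\mu(B(x,r))\asymp r^k$) is transported under the inverse branches, which requires the continuity and strict positivity of $\widetilde J_i$ at the relevant points, and one must keep the constant $C$ of the second paragraph uniform along the words $w^m\omega$ (this is exactly why the ratio argument, in which $2L$ cancels, is used rather than a direct comparison of $\mu(\Delta_n)$ with $|\Delta_n|^k$). A minor technical issue, to be dealt with separately, is that $\pi(w^m\omega)$ may be an endpoint of a cylinder, where the one-sided law can degenerate; this is handled by perturbing the prepended word so that all points produced lie in the interior.
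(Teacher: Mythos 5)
Your overall framework --- arguing by contradiction, reducing to $k$ odd and $L\in(0,\infty)$, and aiming to force $S_\ell(\psi-k\varphi)(\overline w)=0$ on periodic words so that Lemma~\ref{lem:cohom-1} applies --- matches the paper, and your ``engine'' (the exact limit forces $\mu(\pi([\omega|_n]))\asymp L\,\diam(\pi([\omega|_n]))^k$ with constants $2^{1-k}$ and $1$, so that $L$ cancels in ratios of nested cylinders) is sound; it is essentially the paper's computation \eqref{eq:secant-slope}. The gap is in the propagation step. Pushing the power law forward by $\phi_{w^m}$ to $p_m=\pi(w^m\omega)$ via $\mu(\phi_{w^m}(A))=\int_A\widetilde J_{w^m}\,d\mu$ only describes $\mu$ on subsets of the cylinder $\phi_{w^m}(X)$, so the asymptotic $F(p_m+s)-F(p_m)\sim c_m s^k$ is established only for $s$ small compared with $\diam\phi_{w^m}(X)$ (you need $\phi_{w^m}^{-1}(p_m+s)\to x$). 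Consequently the ratio estimate at $p_m$ controls $\bigl|S_{n-n'}g(\sigma^{n'}(w^m\omega))\bigr|$ only for $n>n'\ge m\ell$, i.e.\ only Birkhoff sums along the tail $\omega$ --- information you already had from the engine applied at $x$ itself. The quantity you actually need, $S_{m\ell}g(w^m\omega)$, compares the cylinder $[w^m\omega|_0]=[w^m]$ at generation $m\ell$ with the whole space at generation $0$; bounding it uniformly in $m$ is equivalent to showing $\mu([w^m])\asymp(\diam\pi([w^m]))^k$ with $m$-independent constants, which is precisely what is to be proved. Put differently, the transported constant is $c_m\asymp e^{S_{m\ell}g(w^m\omega)}L$, and while it cancels in ratios \emph{within the range of scales where the law is valid}, that range never reaches the macroscopic scale at which $S_{m\ell}g(w^m\omega)$ would be read off. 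Hence the claimed uniform bound $\sup_{m,j}|S_m g(\sigma^j\eta)|\le C$ with $C$ ``not depending on $p$'' is circular for $j$ below the depth at which the power law becomes valid at $p$.

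The paper avoids this by never leaving the point $x$: it appends a block $(\tau_1\dots\tau_\ell)^N$, where $\tau$ contains two distinct letters and $S_\ell(\psi-k\varphi)(\overline\tau)\neq 0$, to $\omega|_n$, proves by a combinatorial separation argument that $x$ stays at distance $\asymp\diam\pi([\omega|_n])$ from the resulting sub-cylinder, so that the normalized offset $r_n^N$ blows up like $e^{-NS_\ell\varphi(\overline\tau)}$, and then matches the secant-slope identity $\bigl((r^N)^k+(1-r^N)^k\bigr)L$ against the Gibbs estimate $e^{NS_\ell(\psi-k\varphi)(\overline\tau)}$; the asymptotics $r^k+(1-r)^k\sim kr^{k-1}$ as $r\to\infty$ then yield $S_\ell(\psi-\varphi)(\overline\tau)=0$, whence the ratio set is finite and Lemma~\ref{lem:cohom-1} applies. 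You would need some substitute for this geometric step; the Jacobian propagation as written does not supply one.
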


\begin{proof}
{If $x\in\RR\backslash\Lambda$, then $F$ is constant on a neighborhood of $x$ since $\Lambda$ is closed, and hence the limit in the proposition is equal to $0$. Therefore, we may assume that $x\in\Lambda$.}
The statement is trivial for even $k$, since $F$ is increasing. So below we will assume that $k$ is odd.

Suppose by way of contradiction that 
\[
\lim_{y\rightarrow x}\frac{F(y)-F(x)}{(y-x)^{k}}=L\in(0,+\infty).
\]
Note that the limit cannot be negative, since $F$ is increasing.

Let $\omega \in I^\N$ such that $\pi(\omega)=x$.
Let $[s_n,t_n]:=\pi\left(\left[\omega|_{n}\right]\right)$. Then $s_n\leq x\leq t_n$, and
\begin{align}
\begin{split}
\frac{F(t_{n})-F(s_{n})}{(t_{n}-s_{n})^{k}} & =\left(\frac{t_{n}-x}{t_{n}-s_{n}}\right)^{k}\cdot\frac{F(t_{n})-F(x)}{(t_{n}-x)^{k}}\\
 & \quad+\left(\frac{x-s_{n}}{t_{n}-s_{n}}\right)^{k}\cdot\frac{F(s_{n})-F(x)}{(s_{n}-x)^{k}}.
\end{split}
\label{eq:secant-slope}
\end{align}
Denote 
$$r_{n}:=\frac{t_{n}-x}{t_{n}-s_{n}}.$$ 
Observe that $r_n\in[0,1]$ and
$$1-r_{n}=\frac{x-s_{n}}{t_{n}-s_{n}}.$$

There exists  $\ell\in\mathbb{N}$ and $\tau_{1}\dots \tau_{\ell}\in I^\ell$
containing at least two different letters such that 
\[
S_{\ell}(\psi-k\varphi)\left(\overline{\tau_{1}\dots \tau_{\ell}}\right)\neq0.
\]
Indeed such $\tau_{1}\dots \tau_{\ell}$ always exist for otherwise, with the set  $A$ in Lemma \ref{lem:cohom-1} given by the finite set $\{k\}\cup\left\{ \psi(\overline{i})/\varphi(\overline{i}):i\in I\right\}$, Lemma \ref{lem:cohom-1} implies that $\psi$ is cohomologous to $(\dim_{H}\Lambda)\varphi$
and $\dim_{H}\Lambda=k=1$ {(since $\Lambda\subset\RR$)}, contradicting our hypothesis. Replacing $\tau_1\dots\tau_\ell$ by a cyclical permutation if necessary, we may assume that $\tau_2\neq\tau_1$. {Further, we may assume without loss of generality that $\ell\geq 3$: If $\ell=2$, we can replace $\ell$ and $\tau_1\tau_2$ with $\ell'=4$ and $\tau_1'\dots\tau_4'=(\tau_1\tau_2)^2$, and achieve the same effect.}

Taking a subsequence if necessary, we may assume that $\lim r_{n}=r\in[0,1]$.
Then by \eqref{eq:secant-slope}, we have
\[
\frac{F(t_{n})-F(s_{n})}{(t_{n}-s_{n})^{k}}\rightarrow\left(r^{k}+(1-r)^{k}\right)L\in\left[2^{1-k}L,L\right],
\]
as routine calculus shows that the function $r\mapsto r^k+(1-r)^k$ is minimized at $r=1/2$.
Hence,
\begin{equation} \label{eq:bounded-ratio}
\lim_{n\rightarrow\infty}\frac{\mu_\psi\left(\left[\omega|_{n}\right]\right)}{\diam\left(\pi\left(\left[\omega|_{n}\right]\right)\right)^{k}}=\lim_{n\rightarrow\infty}\frac{F(t_{n})-F(s_{n})}{(t_{n}-s_{n})^{k}}\in\left[2^{1-k}L,L\right]. 
\end{equation}

Now, we fix an integer {$m\geq 3$} and consider the interval
\[
\pi\left(\left[\omega|_{n}(\tau_{1}\dots \tau_{\ell})^{m}\right]\right)=:\left[s_{n,m},t_{n,m}\right].
\]
Put
\[
r_{n,m}:=\frac{t_{n,m}-x}{t_{n,m}-s_{n,m}}.
\]
By Lemma \ref{lem:non-adjacency} there is an infinite subset $\mathcal{N}\subset\NN$ such that for each $n\in\mathcal{N}$, the point $x$ and the interval $[s_{n,m},t_{n,m}]$ are separated by a full cylinder interval of level $n+3\ell$. 
Replacing $\mathcal{N}$ by an infinite subset if necessary, we may assume that either $t_{n,m}>x$ for all $n\in \mathcal{N}$, or $t_{n,m}<x$ for all $n\in \mathcal{N}$. Without loss of generality, we assume the former. The argument in the latter case is similar. Thus, $r_{n,m}>0$ for all $n\in\mathcal{N}$.

Let $r_{\min}>0$ be a number such that $|\phi_i'(x)|\geq r_{\min}$ for all $i\in I$ and $x\in X$. Then
\begin{equation} \label{eq:diameters}
\diam (\pi([uv]))\geq \diam (\pi([u]))\cdot r_{\min}^{|v|}
\end{equation}
for any two finite words $u$ and $v$.

For $n\in\mathcal{N}$, let $[u_n,v_n]$ be a cylinder interval of length $n+3\ell$ separating $x$ and $[s_{n,m},t_{n,m}]$. Then by \eqref{eq:diameters},
\[
|t_{n,m}-x|\geq v_n-u_n\geq \diam (\pi([\omega|_n]))\cdot r_{\min}^{3\ell}.
\]
Hence, there is a constant $C=C(\ell)>1$, depending only on $\ell$ but not on $n$ or $m$, such that
\[
C^{-1}<\frac{t_{n,m}-x}{\diam\big(\pi\left(\left[\omega|_{n}\right]\right)\big)}<C,
\]
for all $n\in\mathcal{N}$. We denote this by
\begin{equation} \label{eq:comparable}
t_{n,m}-x\asymp_\ell\diam\big(\pi\left(\left[\omega|_{n}\right]\right)\big). 
\end{equation}

Now \eqref{eq:comparable} implies 
\begin{equation}
\label{eq:rn-bdd}
r_{n,m}=\frac{t_{n,m}-x}{t_{n,m}-s_{n,m}}\asymp_\ell\frac{\diam\left(\pi\left(\left[\omega|_{n}\right]\right)\right)}{\diam\big(\pi\left(\left[\omega|_{n}(\tau_{1}\dots \tau_{\ell})^{m}\right]\right)\big)}\asymp_\ell e^{-mS_{\ell}\left(\varphi\right)(\overline{\tau_{1}\dots \tau_{\ell}})}.
\end{equation}
Hence, by passing to a subsequence, using a diagonal argument, we may assume that $r_{n,m}$ converges to some number $q_m \in \mathbb{R}$ for every $m\ge 1$.  {Since $r_{n,m}>0$ for every $n\in\mathcal{N}$, it follows that $q_m\geq 0$.}

Using {the analogue of \eqref{eq:secant-slope} with $s_{n,m}$ and $t_{n,m}$ in place of $s_n$ and $t_n$} and letting $n\rightarrow \infty$ we obtain 
\begin{equation} \label{eq:limit-bounded-below}
\lim_{n\to\infty}\frac{F(t_{n,m})-F(s_{n,m})}{(t_{n,m}-s_{n,m})^k} =\left(q_{m}^k+(1-q_{m})^k\right)L \ge 2^{1-k}L.
\end{equation}

On the other hand, by the Gibbs property \eqref{eq:gibbs} of $\mu_\psi$, the bounded distortion property \eqref{eq:bdd} and \eqref{eq:bounded-ratio} we have  
\begin{align*}
\frac{F(t_{n,m})-F(s_{n,m})}{(t_{n,m}-s_{n,m})^k} & =\frac{\mu_\psi\left(\left[\omega|_{n}(\tau_{1}\dots \tau_{\ell})^{m}\right]\right)}{\left(\diam(\pi\left(\left[\omega|_{n}(\tau_{1}\dots \tau_{\ell})^{m}\right])\right)\right)^k}\\
 & \asymp_\ell\frac{\mu_\psi\left(\left[\omega|_{n}\right]\right)}{\left(\diam(\pi\left(\left[\omega|_{n}\right])\right)\right)^k}\cdot e^{mS_{\ell}\left(\psi-k\varphi\right)(\overline{\tau_{1}\dots \tau_{\ell}})}\\
 & \asymp_\ell e^{mS_{\ell}\left(\psi-k\varphi\right)(\overline{\tau_{1}\dots \tau_{\ell}})}.
\end{align*}
If $k=1$ this already gives a contradiction, since we then have equality in \eqref{eq:limit-bounded-below}, which implies $S_{\ell}\left(\psi-k\varphi\right)(\overline{\tau_{1}\dots \tau_{\ell}})=0$.

Suppose now that $k\geq 3$. We have shown that
\[
2^{1-k}L\le \big(q_{m}^k+(1-q_{m})^k\big)L= \lim_{n\to\infty}\frac{F(t_{n,m})-F(s_{n,m})}{(t_{n,m}-s_{n,m})^k} \asymp_{\ell} e^{mS_{\ell}\left(\psi-k\varphi\right)(\overline{\tau_{1}\dots \tau_{\ell}})}.
\]
Since $S_{\ell}(\psi-k\varphi)\left(\overline{\tau_{1}\dots \tau_{\ell}}\right)\neq0$
it follows that $S_{\ell}(\psi-k\varphi)\left(\overline{\tau_{1}\dots \tau_{\ell}}\right)>0$, and hence, $q_{m}\rightarrow\infty$ as $m\rightarrow\infty$.
Since $k$ is odd, $r^k+(1-r)^k=kr^{k-1}+O(r^{k-2})$ as $r\to\infty$, so we conclude that $q_m^k+(1-q_m)^k\asymp q_m^{k-1}$. Thus,
\[
q_{m}^{k-1}\asymp e^{mS_{\ell}\left(\psi-k\varphi\right)(\overline{\tau_{1}\dots \tau_{\ell}})}.
\]
Together with \eqref{eq:rn-bdd}, this implies
\[
e^{mS_{\ell}\left(\psi-k\varphi\right)(\overline{\tau_{1}\dots \tau_{\ell}})} \asymp q_m^{k-1} \asymp e^{-m(k-1)S_{\ell}\left(\varphi\right)(\overline{\tau_{1}\dots \tau_{\ell}})},
\]
and hence, 
\[
S_{\ell}\left(\psi-k\varphi\right)(\overline{\tau_{1}\dots \tau_{\ell}})=-(k-1)S_{\ell}\left(\varphi\right)(\overline{\tau_{1}\dots \tau_{\ell}}),
\]
i.e. 
\[
S_{\ell}\left(\psi-\varphi\right)(\overline{\tau_{1}\dots \tau_{\ell}})=0.
\]

We have thus shown that if $\tau_{1}\dots \tau_{\ell}$ contains at least two different letters, then 
\[
S_{\ell}\left(\psi-k\varphi\right)(\overline{\tau_{1}\dots \tau_{\ell}})\neq0\implies S_{\ell}\left(\psi-\varphi\right)(\overline{\tau_{1}\dots \tau_{\ell}})=0.
\]
Therefore, 
\begin{equation}
\label{eq:1or3}
\frac{S_{\ell}\psi\left(\overline{\tau_{1}\dots \tau_{\ell}}\right)}{S_{\ell}\varphi\left(\overline{\tau_{1}\dots \tau_{\ell}}\right)}\in\left\{ 1,k\right\} .
\end{equation}
It remains to consider the fixed points $\overline{\tau_1}$:
\[
\frac{S_{1}\psi\left(\overline{\tau_1}\right)}{S_{1}\varphi\left(\overline{\tau_1}\right)}=\frac{\psi\left(\overline{\tau_1}\right)}{\varphi\left(\overline{\tau_1}\right)}.
\]
Hence, the set
\[
A:=\left\{\frac{S_{\ell}\psi\left(\overline{\tau_{1}\dots \tau_{\ell}}\right)}{S_{\ell}\varphi\left(\overline{\tau_{1}\dots \tau_{\ell}}\right)},\quad \ell \ge 1,\quad \tau_1\dots \tau_\ell \in I \right\}
\]
contains only finitely many values, at least one of which is 1 or $k$ in view of \eqref{eq:1or3}. Now, Lemma \ref{lem:cohom-1} implies
that $\psi$ is cohomologous to $(\dim_{H}\Lambda)\varphi$ and  $\dim_{H}\Lambda=1$.
Hence, $\varphi$ is cohomologous to $\psi$, which is a contradiction. 
\end{proof}

\begin{proof}[Proof of Theorem \ref{thm:main}]
We show that, for any $\alpha>1$, if there {exist $C>0$, $h>0$ and} a polynomial $P$ of degree $n<\alpha$ such that
\begin{equation} \label{eq:polynomial-Holder}
|F(t)-P(t-t_0)|\leq C|t-t_0|^\alpha \qquad \mbox{for all {$t\in(t_0-h,t_0+h)$}},
\end{equation}
then $P$ is constant: $P(t)=F(t_0)$ for every $t\in\RR$. This clearly implies that $\alpha_F(t_0)=\tilde{\alpha}_F(t_0)$.

Suppose, to the contrary, that $P$ has degree $1\leq n<\alpha$ and satisfies \eqref{eq:polynomial-Holder}. We can write
\[
P(t)=a_0+a_1 t+a_2 t^2+\dots +a_n t^n,
\]
where $a_0=P(0)=F(t_0)$. We will show by induction that $a_1=a_2=\dots=a_n=0$. First, write
\[
\frac{F(t)-F(t_0)}{t-t_0}=\frac{F(t)-P(t-t_0)}{t-t_0}+\frac{P(t-t_0)-F(t_0)}{t-t_0},
\]
and observe by \eqref{eq:polynomial-Holder} that
\[
\left|\frac{F(t)-P(t-t_0)}{t-t_0}\right|\leq C|t-t_0|^{\alpha-1} \to 0 \qquad \mbox{as $t\to t_0$}.
\]
Hence,
\[
F'(t_0)=\lim_{t\to t_0}\frac{F(t)-F(t_0)}{t-t_0}=\lim_{t\to t_0}\frac{P(t-t_0)-P(0)}{t-t_0}=P'(0)=a_1.
\]
By Proposition \ref{prop:no-positive-derivative}, it follows that $a_1=0$, establishing the basis for the induction.

Next, take $2\leq j\leq n$, and assume we have already shown that $a_1=\dots=a_{j-1}=0$. Thus,
\[
P(t)=F(t_0)+a_j t^j+\dots+a_n t^n.
\]
Since $j\leq n<\alpha$, we can apply the same idea as above to conclude that
\[
\lim_{t\to t_0}\frac{F(t)-F(t_0)}{(t-t_0)^j}=\lim_{t\to t_0}\frac{P(t-t_0)-P(0)}{(t-t_0)^j}=\lim_{t\to t_0}\sum_{i=j}^n \frac{a_i(t-t_0)^i}{(t-t_0)^j}=a_j,
\]
and therefore, $a_j=0$ by Proposition \ref{prop:no-positive-derivative}.

By induction, we have established the claim, so $P(t)\equiv F(t_0)$, as desired.
\end{proof}

\section{The pointwise H\"older spectrum of $F_\mu$} \label{sec:Holder-spectrum}

Recall from the Introduction that $E_F(\alpha)=\{t\in\RR:\alpha_F(t)=\alpha\}$.
As a consequence of Theorem \ref{thm:main}, we obtain the following result, in which $\alpha_-$ and $\alpha_+$ are given by \eqref{spectrum-endpoints}, and $\beta^*(\alpha)$ is defined by \eqref{eq:Legendre-transform}.

\begin{corollary} \label{cor:Holder-spectrum}
With the notation of Theorem \ref{thm:main}, we have that
\[
\dim_H E_F(\alpha)=\beta^*(\alpha) \qquad \forall\,\alpha\in[\alpha_-,\alpha_+],
\]
whereas $E_F(\alpha)=\emptyset$ for $\alpha\not\in[\alpha_-,\alpha_+]$.
\end{corollary}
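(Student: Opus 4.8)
The plan is to observe that Corollary~\ref{cor:Holder-spectrum} is a bookkeeping consequence of Theorem~\ref{thm:main} together with two facts already recalled in the Introduction: the formula \eqref{eq:hoelder exponent as liminf} expressing $\tilde\alpha_F$ as a local dimension, and the multifractal theorem of \cite{JS-2020} for the liminf level sets $\EE_*(\alpha;\mu)$. All of the genuine analytic work has been absorbed into Theorem~\ref{thm:main}, so what remains is an identification of the relevant level sets together with a support statement.

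First I would set $\mu:=\mu_\psi\circ\pi^{-1}$ and rewrite $E_F(\alpha)$ as a local-dimension level set of $\mu$. By Theorem~\ref{thm:main} we have $\alpha_F(t)=\tilde\alpha_F(t)$ for every $t\in\RR$, and applying \eqref{eq:hoelder exponent as liminf} to $\mu$ gives $\tilde\alpha_F(t)=\liminf_{r\to0}\frac{\log\mu(B(t,r))}{\log r}$. Consequently
\[
E_F(\alpha)=\left\{t\in\RR:\liminf_{r\to0}\frac{\log\mu(B(t,r))}{\log r}=\alpha\right\}=\EE_*(\alpha;\mu).
\]
For $\alpha\in[\alpha_-,\alpha_+]$ the desired identity $\dim_H E_F(\alpha)=\beta^*(\alpha)$ is then precisely $\dim_H\EE_*(\alpha;\mu)=\beta^*(\alpha)$, which is the theorem of \cite{JS-2020}; it applies directly here because all maps $\phi_i$ are assumed increasing. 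This settles the dimension formula.

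It then remains to show $\EE_*(\alpha;\mu)=\emptyset$ when $\alpha\notin[\alpha_-,\alpha_+]$. I would split $\RR=\Lambda\cup(\RR\setminus\Lambda)$. Since the Gibbs measure $\mu_\psi$ has full support, $\supp\mu=\Lambda$; hence for $t\notin\Lambda$ we have $\mu(B(t,r))=0$ for all small $r$, the liminf above equals $+\infty$, and such $t$ lie in no $E_F(\alpha)$ with finite $\alpha$. For $t=\pi(\omega)\in\Lambda$, the Gibbs property \eqref{eq:gibbs} (recall $\pressure(\psi)=0$) together with bounded distortion gives $\mu_\psi([\omega|_n])\asymp e^{S_n\psi(\omega)}$ and $\diam\pi([\omega|_n])\asymp e^{S_n\varphi(\omega)}$. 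Comparing each ball $B(t,r)$ with cylinder intervals of comparable diameter, and using that every symbolic ratio $\frac{S_m\psi}{S_m\varphi}$ lies in $[\alpha_-,\alpha_+]$ --- a fact already extracted in the proof of Lemma~\ref{lem:cohom-1} --- one checks that $\liminf_{r\to0}\frac{\log\mu(B(t,r))}{\log r}\in[\alpha_-,\alpha_+]$. Hence $E_F(\alpha)=\emptyset$ for $\alpha\notin[\alpha_-,\alpha_+]$.

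The only step demanding genuine care is the ball-versus-cylinder comparison underlying the emptiness argument: a ball $B(t,r)$ may straddle several cylinders and span complementary gaps of $\Lambda$, so to bound $\mu(B(t,r))$ from above one must invoke the open set condition and the uniform derivative bound $r_{\min}$ (as in \eqref{eq:diameters}) to sandwich it between the measures of a bounded number of cylinders of diameter $\asymp r$, each of whose symbolic ratios is trapped in $[\alpha_-,\alpha_+]$. This is exactly the kind of estimate carried out in \cite{JS-2020}, so in practice one may instead simply invoke that the multifractal spectrum of $\EE_*(\cdot\,;\mu)$ is supported on $[\alpha_-,\alpha_+]$ and dispense with the computation altogether.
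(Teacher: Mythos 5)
Your proposal is correct and follows essentially the same route as the paper: identify $E_F(\alpha)=\EE_*(\alpha;\mu_\psi)$ via Theorem \ref{thm:main} and \eqref{eq:hoelder exponent as liminf}, then invoke \cite{JS-2020} for both the dimension formula and the emptiness outside $[\alpha_-,\alpha_+]$. The extra ball-versus-cylinder discussion you sketch is not needed, since, as you note at the end, the support statement is already part of the cited result.
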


\begin{proof}
By \eqref{eq:hoelder exponent as liminf} and Theorem \ref{thm:main}, it follows that
\[
E_F(\alpha)=\EE_*(\alpha,\mu_\psi).
\]
As shown in \cite{JS-2020}, $\dim_H \EE_*(\alpha,\mu_\psi)=\beta^*(\alpha)$ for $\alpha\in[\alpha_-,\alpha_+]$, and $\EE_*(\alpha,\mu_\psi)=\emptyset$ otherwise.
\end{proof}

\begin{remark}
{\rm
The situation is different for the packing spectrum. We illustrate this for the case when $\mu$ is a self-similar measure. Suppose the maps $\{\phi_i\}_{i\in I}$ are similarities with contraction ratios $\{r_i\}$, and $\mathbf{p}=(p_1,\dots,p_m)$ is a probability vector. Put $\psi(\omega)=\log p_{\omega_1}$ for $\omega\in I^\NN$. Likewise we obtain $\varphi(\omega)=\log r_{\omega_1}$. Then $\mu_\psi$ is just the self-similar measure satisfying
\[
\mu=\sum_{i\in I}p_i \mu\circ \phi_i^{-1},
\]
and the pressure equation \eqref{eq:pressure equation} becomes
\[
\sum_{i\in I} r_i^{\beta(q)}p_i^q=1.
\]
Let $A(x;\mu)$ denote the set of accumulation points of $\log \mu(B(x,r))/\log r$ as $r\to 0$. It was shown by Baek, Olsen and Snigireva \cite{BOS} under the strong separation condition, and later by Zhou and Chen \cite{Zhou-Chen} under the open set condition, that for any closed interval $I$, the set
\[
\Delta(I):=\{x\in\RR: A(x;\mu)=I\}
\]
has packing dimension
\[
\dim_P \Delta(I)=\max_{\alpha\in I} \beta^*(\alpha).
\]
Let $\alpha_0$ be the unique value of $\alpha$ maximizing $\beta^*(\alpha)$. Take first $\alpha<\alpha_0$, and put $I=[\alpha,\alpha_0]$. Since $\EE_*(\alpha)\supset \Delta(I)$, it follows that 
\[
\dim_P E_F(\alpha)=\dim_P \EE_*(\alpha;\mu)\geq \dim_P \Delta(I)=\beta^*(\alpha_0)=\dim_P \Lambda.
\]
The reverse inequality is trivial since $E_F(\alpha)\subset \Lambda$. Hence, $\dim_P E_F(\alpha)=\beta^*(\alpha_0)$.

For $\alpha\geq\alpha_0$, we trivially have $\dim_P \EE_*(\alpha;\mu)\geq \dim_P\EE(\alpha;\mu)=\beta^*(\alpha)$ since $\EE_*(\alpha;\mu)\supset\EE(\alpha,\mu)$. But we also have $\dim_P \EE_*(\alpha;\mu)\leq \beta^*(\alpha)$. This follows directly from a careful examination of the proof that $\dim_P \EE(\alpha;\mu)=\beta^*(\alpha)$; see, for instance, \cite[p.~501]{Patzschke}. 
We thus conclude that
\begin{equation} \label{eq:packing-spectrum}
\dim_P E_F(\alpha)=\begin{cases}
\beta^*(\alpha_0) & \mbox{if $\alpha_-\leq\alpha\leq\alpha_0$},\\
\beta^*(\alpha) & \mbox{if $\alpha_0\leq\alpha\leq \alpha_+$}.
\end{cases}
\end{equation}
Note that, on the left half of the multifractal spectrum, the Hausdorff dimension does not differentiate between the sets $\EE(\alpha;\mu)$ and $\EE_*(\alpha;\mu)$, but the packing dimension does. 

}
\end{remark}

\section{Conjugacy maps between expanding piecewise $\mathcal{C}^{1+\epsilon}$ maps} \label{sec:conjugacy-maps}

We apply our results to conjugacy maps 
between expanding piecewise $\mathcal{C}^{1+\epsilon}$ interval maps, 
extending  \cite[Corollary 2.3]{JS-2020} in two ways. Firstly, we also analyze the pointwise  H\"older exponent $\alpha$ of the conjugacy map, whereas in   \cite{JS-2020} only $\tilde \alpha$ was considered. Secondly, we allow interval maps with orientation-reversing branches. 

Let $f$ be an expanding
piecewise $\mathcal{C}^{1+\epsilon}$ interval map with $s\ge2$ full
branches, i.e., there exist closed intervals $J_{1},\dots,J_{s}\subset[0,1]$
with non-empty, pairwise disjoint interiors such that $f|_{J_{i}}$
has a $\mathcal{C}^{1+\epsilon}$ extension to a neighbourhood of
$J_{i}$ satisfying $|f'|_{J_{i}}|>1$ and $f(J_{i})=[0,1]$, for $1\le i\le s$.
We will always assume that $\sup J_{i}\le\inf J_{j}$ if $i<j$.
The repeller of $f$ is denoted by $\Lambda$ and the restriction
$f|_{\Lambda}:\Lambda\rightarrow\Lambda$ is semi conjugate to the shift
$\sigma:I^{\N}\rightarrow I^{\N}$ with alphabet $I:=\left\{ 1,\dots,s\right\} $.
The semi conjugacy is given by the coding map $\pi_{f}:I^{\N}\rightarrow\Lambda$
of the conformal iterated function systems $\Phi_{f}$, which is given
by the contracting inverse branches $(f|_{J_{i}})^{-1}:[0,1]\rightarrow [0,1]$,
$1\le i\le s$. 
Note that each branch is either orientation-preserving, or orientation-reversing.  We denote by $\varphi_{f}:I^{\N}\rightarrow\R$
the geometric potential of $\Phi_{f}$ given by $\varphi_{f}:=-\log |f'\circ\pi_{f}|$. 

We say that two piecewise $\mathcal{C}^{1+\epsilon}$ interval maps $f$ and $g$ have the {\em same signature} if they share the same alphabet $I$ and satisfy 
$$\{i\in I\mid f_i \text{ is orientation-preserving }\}=\{i\in I\mid g_i \text{ is orientation-preserving }\}.$$ 
If $f$ and $g$ have the same signature and $\Lambda_g=[0,1]$, then 
$\pi_g(\pi_f^{-1}\{x\})$ is  a singleton for each $x\in \Lambda_f$. We can thus define the conjugacy 
map 
\[\Theta:\Lambda_f\rightarrow[0,1], \quad \{\Theta(x)\}=\pi_g(\pi_f^{-1}\{x\}).
\] 
Slightly abusing notation, we write $\Theta=\pi_g\circ\pi_f^{-1}$. Note that we have 
 $\Theta\circ f=g\circ\Theta$, which implies for each $x\in\Lambda_f$,  
 \begin{equation}
\pi_{g}^{-1}\left((-\infty,\Theta(x)]\right)=\pi_{f}^{-1}\left((-\infty,x]\right).\label{eq:sets}
\end{equation}
The conjugacy $\Theta$ has a unique continuous extension to a function on $\R$ which is locally constant on $\R \setminus \Lambda_f$.  We denote this function again by $\Theta$ and refer to it as the conjugacy map between $f$ and $g$. Note that \eqref{eq:sets} continues to hold for the conjugacy map between $f$ and $g$.  

Since $\Lambda_g=[0,1]$ we have that the Lebesgue measure $\lambda$ on $[0,1]$ satisfies $\lambda=\mu_{\varphi_{g}}\circ\pi_{g}^{-1}$ where $\mu_{\varphi_{g}}$ is a Gibbs measure for $\varphi_g$. Hence, by  \eqref{eq:sets}, the conjugacy map $\Theta$ between $f$ and $g$    coincides with the distribution function of $\mu_{\varphi_{g}}\circ\pi_{f}^{-1}$. Our main theorem thus implies the following. 

\begin{corollary} \label{cor}
Let $f$ and $g$ be two expanding piecewise $\mathcal{C}^{1+\epsilon}$
interval maps 
with full branches 
and with the same signature, {and assume $\Lambda_g=[0,1]$.} Suppose that $\varphi_f$ is not cohomologous to $\varphi_g$.  Then, for the conjugacy map $\Theta:\RR\rightarrow [0,1]$ between  
between $f$ and $g$ we have 
\[
\dim_{H}\left\{ t\in\RR\mid\alpha_\Theta(t)=\alpha\right\}=\dim_{H}\left\{ t\in\RR\mid\tilde{\alpha}_\Theta(t)=\alpha\right\} =\beta^{*}(\alpha),\quad \alpha \in\left[\alpha_{-},\alpha_{+}\right],
\]
where $\beta:\R\rightarrow\R$ is defined implicitly by $\mathcal{P}(\beta(q)\varphi_{f}+q\varphi_{g})=0$, and $\alpha_-<\alpha_+$.
\end{corollary}

\begin{remark} Let $f$ and $g$ be two expanding piecewise $\mathcal{C}^{1+\epsilon}$ interval maps 
with full branches. Suppose in addition that all branches of $f$ and $g$ are monotonically increasing and that $\Lambda_f = \Lambda_g=[0,1]$.  Then, it follows from \cite[Proof of Theorem 1.2]{JKPS-2009} that the conjugacy map $\Theta:[0,1]\rightarrow [0,1]$ between  $f$ and $g$ is a $\mathcal{C}^{1+\epsilon}$-diffeomorphism if and only if $\varphi_f$ is cohomologous to $\varphi_g$. 
\end{remark}  

\begin{example} \label{ex:conjugacy}
{\rm
Let
\[
f(x):=\begin{cases} 3x & \mbox{if $0\leq x\leq 1/3$},\\
3x-2 & \mbox{if $2/3\leq x\leq 1$}, 
\end{cases}
\qquad
g(x):=\begin{cases} 2x & \mbox{if $0\leq x<1/2$},\\
2x-1 & \mbox{if $1/2\leq x\leq 1$}.
\end{cases}
\]
Then the repeller $\Lambda$ of $f$ is the ternary Cantor set and the conjugacy $\Theta$ between $f$ and $g$ is the ternary Cantor function.
Suppose we invert the second branch of both $f$ and $g$; that is, we consider instead the functions
\[
\tilde{f}(x):=\begin{cases} 3x & \mbox{if $0\leq x\leq 1/3$},\\
4-3x & \mbox{if $2/3\leq x\leq 1$}, 
\end{cases}
\qquad
\tilde{g}(x):=\begin{cases} 2x & \mbox{if $0\leq x<1/2$},\\
3-2x & \mbox{if $1/2\leq x\leq 1$}.
\end{cases}
\]
Note that $\tilde{f}$ and $\tilde{g}$ again have the same signature.
The repeller of $\tilde{f}$ is still the ternary Cantor set, and the conjugacy $\tilde{\Theta}$ between $\tilde{f}$ and $\tilde{g}$ is still the ternary Cantor function. In other words, $\tilde{\Theta}:=\pi_{\tilde g}\circ\pi_{\tilde{f}}^{-1}=\Theta$. This follows because, if the coding of $x\in\Lambda$ under $f$ is $\omega:=(\omega_n)_n:=\pi_f^{-1}(x)$, then the coding of $x$ under $\tilde{f}$ is $\tilde{\omega}=(\tilde{\omega}_n)_n=\pi_{\tilde f}^{-1}(x)$, where $\tilde{\omega}_1=\omega_1$ and successively, for $n=2,3,\dots$,
\[
\tilde{\omega}_n:=\begin{cases}
\omega_n & \mbox{if $\#\{i<n: \tilde{\omega}_i=2\}$ is even},\\
3-\omega_n & \mbox{otherwise},
\end{cases}
\]
as is easy to see. We then have both $\pi_f(\omega)=\pi_{\tilde f}(\tilde{\omega})=x$ and $\pi_g(\omega)=\pi_{\tilde g}(\tilde{\omega})$, so that $\tilde{\Theta}(x)=\pi_{\tilde g}(\pi_{\tilde f}^{-1}(x)) =\pi_g(\pi_f(x))=\Theta(x)$. Observe that the spectrum in this example is trivial, i.e.  $\alpha_-=\alpha_+=\log 2/\log 3$.

}
\end{example}

\begin{example}
{\rm
 More generally, let $m,n\in \N$ with $m,n\ge 3$. Let
\[
f(x):=\begin{cases} mx & \mbox{if $0\leq x\leq 1/m$},\\
nx-(n-1) & \mbox{if $(n-1)/n\leq x\leq 1$}, 
\end{cases}
\]
and let $g(x)$ again be as in Example \ref{ex:conjugacy}.
If $m=n$, the situation is much as it was in the last example. But if $m\neq n$, the conjugacy $\Theta$ changes when we reverse the second branch of both $f$ and $g$, because the repeller $\Lambda$ changes. Nonetheless, the new conjugacy $\tilde{\Theta}$ has the same H\"older spectrum as $\Theta$. The range of the spectrum is 
\[
[\alpha_-,\alpha_+]=\left[\min\left\{\frac{\log 2}{\log m},\frac{\log 2}{\log n}\right\},\max\left\{\frac{\log 2}{\log m},\frac{\log 2}{\log n}\right\}\right],
\]
and the spectrum is given by Corollary \ref{cor}.
}
\end{example}

\section*{Acknowledgments}
{The authors wish to thank two anonymous referees for their careful reading of the paper.}
P.A. was partially supported by Simons Foundation grant \#709869.
J.J. was partially supported by the JSPS KAKENHI 24K06777.
\small


\begin{thebibliography}{12}

\bibitem{Allaart-2018}
P.~C. Allaart, Differentiability and {H}\"{o}lder spectra of a class of
  self-affine functions, {\em Adv. Math.} \textbf{328} (2018), 1--39. 

\bibitem{Allaart-2020}
{P. C. Allaart}, The pointwise H\"older spectrum of general self-affine functions on an interval. {\em J. Math. Anal. Appl.} {\bf 488} (2020), no. 2, Article 124096, 35 pp.

\bibitem{BOS}
{I. S. Baek}, {L. Olsen} and {N. Snigireva}, Divergence points of self-similar measures and packing dimension. 
{\em Adv. Math.} {\bf 214} (2007), no. 1, 267--287.

\bibitem{BKK}
B.~B\'{a}r\'{a}ny, G.~Kiss, and I.~Kolossv\'{a}ry, Pointwise regularity
  of parameterized affine zipper fractal curves, {\em Nonlinearity} \textbf{31} (2018), no.~5, 1705--1733. 


\bibitem{BenSlimane}
{M. Ben Slimane}, Multifractal formalism for selfsimilar functions expanded in singular basis. {\em Appl. Comput. Harmon. Anal.} {\bf 11} (2001), 387--419.

\bibitem{bowen-equilibrium}
R.~Bowen, \emph{Equilibrium states and the ergodic theory of {A}nosov
  diffeomorphisms}, Springer-Verlag, Berlin, 1975, Lecture Notes in
  Mathematics, Vol. 470. 


\bibitem{Falconer-2004}
K.~Falconer, One-sided multifractal analysis and points of
  non-differentiability of devil's staircases, {\em Math. Proc. Cambridge Philos.  Soc.} \textbf{136} (2004), no.~1, 167--174. 

\bibitem{Frisch-Parisi}
U.~Frisch and G.~Parisi, On the singularity structure of fully developed
  turbulence. {\em Turbulence and predictability in geophysical fluid dynamics and climate dynamics} (North Holland Amsterdam), 1985, pp.~84--88.




\bibitem{JS-2020}
{J. Jaerisch} and {H. Sumi}, Multifractal Formalism for generalised local dimension spectra of Gibbs measures on the real line. {\em J. Math. Anal. Appl.} {\bf 491} (2020), no. 2, Article 124246, 9 pp.

\bibitem{Jaffard1}
{S. Jaffard}, Multifractal formalism for functions part I: results valid for all functions. {\em SIAM J. Math. Anal.} {\bf 28} (1997), no. 4, 944--970.

\bibitem{Jaffard2}
{S. Jaffard}, Multifractal formalism for functions part II: self-similar functions.  {\em SIAM J. Math. Anal.} {\bf 28} (1997), no. 4, 971--998.

\bibitem{JafMan}
{S. Jaffard} and {B. B. Mandelbrot}, Local regularity of nonsmooth wavelet expansions and application to the Polya function. {\em Adv. Math} {\bf 120} (1996), 265--282.

\bibitem{JKPS-2009}
T.~Jordan, M.~Kesseb{\"o}hmer, M.~Pollicott, and B.~O. Stratmann, Sets of nondifferentiability for conjugacies between expanding interval maps, {\em Fund.  Math.} \textbf{206} (2009), 161--183. 


\bibitem{KS-2009}
M.~Kesseb{{\"o}}hmer and B.~O. Stratmann, H{\"o}lder-differentiability of 
{G}ibbs distribution functions, {\em Math. Proc. Cambridge Philos. Soc.}
  \textbf{147} (2009), no.~2, 489--503. 


\bibitem{Mandelbrot}
B.~B. Mandelbrot, Intermittent turbulence in self-similar cascades:
  divergence of high moments and dimension of the carrier, {\em J. Fluid Mechanics Digital Archive} \textbf{62} (1974), no.~2, 331--358.

\bibitem{Mauldin-Urbanski}
R.~D. Mauldin and M.~Urba{\'n}ski, Dimensions and measures in infinite
  iterated function systems, {\em Proc. London Math. Soc.} (3) \textbf{73} (1996),  no.~1, 105--154. 

\bibitem{MauUrb03} 
R. D. Mauldin and M. Urba\'nski.
{\it Graph directed Markov systems: Geometry and Dynamics of Limit Sets.} Cambridge Tracts in Mathematics
 {\bf  148} Cambridge University Press (2003)

\bibitem{Otani}
A.~Otani, Fractal dimensions of graph of {W}eierstrass-type function and local {H}\"{o}lder exponent spectra, {\em Nonlinearity} \textbf{31} (2018), no.~1, 263--292. 

\bibitem{Patzschke}
N.~Patzschke, Self-conformal multifractal measures, {\em Adv. in Appl. Math.}  \textbf{19} (1997), no.~4, 486--513. 


\bibitem{pesin-dimension-theory}
Y.~B. Pesin, \emph{Dimension theory in dynamical systems}, Chicago Lectures in  Mathematics, University of Chicago Press, Chicago, IL, 1997, Contemporary views and applications. 

\bibitem{Pesin-Weiss-97a}
Y.~Pesin and H.~Weiss, A multifractal analysis of equilibrium measures
  for conformal expanding maps and {M}oran-like geometric constructions, {\em J. Statist. Phys.} \textbf{86} (1997), no.~1-2, 233--275. 



\bibitem{schmeling1999}
J. Schmeling, On the completeness of multifractal spectra. {\em Ergodic Theory Dynam. Systems}. \textbf{19}, 1595-1616 (1999)

\bibitem{Seuret}
{S. Seuret}, On multifractality and time subordination for continuous functions. {\em Adv. Math.} {\bf 220} (2009), no. 3, 936--963.

\bibitem{Troscheit}
S.~Troscheit, H\"{o}lder differentiability of self-conformal devil's
  staircases, {\em Math. Proc. Cambridge Philos. Soc.} \textbf{156} (2014), no.~2,
  295--311. 

\bibitem{Zhou-Chen}
{X. Zhou} and {E. Chen}, Packing dimensions of the divergence points of self-similar measures with OSC.
{\em Monatsh. Math.} {\bf 172} (2013), no. 2, 233--246.

\end{thebibliography}
\end{document}